\title{Updating the error term in the prime number theorem}
\author{Tim Trudgian\footnote{Supported by Australian Research Council DECRA Grant DE120100173.}\\ Mathematical Sciences Institute\\ The Australian National University, ACT 0200, Australia\\ timothy.trudgian@anu.edu.au }
\newtheorem{thm}{Theorem}
\newtheorem{ex}{Example}
\newtheorem{cor}{Corollary}
\newtheorem{lem}{Lemma}
\begin{document}

\maketitle

\begin{abstract}
\noindent An improved estimate is given for $|\theta(x) -x|$, where $\theta(x) = \sum_{p\leq x} \log p$. Four applications are given: the first to arithmetic progressions that have points in common, the second to primes in short intervals,  the third to a conjecture by Pomerance, and the fourth to an inequality studied by Ramanujan.
\\
\\
Key words: Prime number theorem; Chebyshev functions; Ramanujan's inequality\\ \\
AMS Codes: 11M06, 11N05\\ \\
Dedicated to MG Johnson, RJ Harris, PM Siddle, and NM Lyon, all of whom enabled me to work two extra days on this article.
\end{abstract}

\section{Introduction}
One version of the prime number theorem is that $\theta(x) \sim x$, where $\theta(x) = \sum_{p\leq x} \log p$. Several applications call for an explicit estimate on the error $\theta(x) - x$. Schoenfeld, \cite[Thm 11]{Schoenfeld} proved that for
\begin{equation}\label{ep}
\epsilon_{0}(x) = \sqrt{\frac{8}{17 \pi}} X^{1/2}e^{-X}, \quad X = \sqrt{(\log x)/R_{0}}, \quad R_{0}= 9.6459, 
\end{equation}
the following inequality holds
$$|\theta(x) - x| \leq x \epsilon_{0}(x), \quad (x \geq 101).$$ 
The pair of numbers $(R_{0}, 17)$ in (\ref{ep}) is particularly interesting. These arise from \cite[Thm 1]{RS2}, namely, the theorem that 
\begin{equation}\label{zero}
\textrm{$\zeta(s)$ has no zeroes in the region} \; \sigma \geq 1- \frac{1}{R \log |\frac{t}{B}|}, \quad (t\geq t_{0})
\end{equation}
for $R= R_{0}$, $B= 17$ and $t_{0} = 21$. Ramar\'{e} and Rumely \cite[p.\ 409]{RamRum} proved (\ref{zero}) with $(R, B, t_{0}) = (R_{0}, 38.31, 1000)$; Kadiri \cite{Kadiri} proved (\ref{zero}) with $(R, B, t_{0}) = (5.69693, 1, 2)$.

A meticulous overhaul of Schoenfeld's paper would be required to furnish a `general' version of (\ref{zero}), that is, one in which $B$ and $R$ are chosen for maximal effect. This article does not attempt such an overhaul. Rather, forcing $B$ to be $17$ in (\ref{zero}) means that many of the numerical estimations in Schoenfeld's article can be let through to the keeper. With $B= 17$ one can obtain admissible values of $R$ and $t_{0}$ in (\ref{zero}) as follows.

Let the Riemann hypothesis be true up to height $H$: by Platt \cite{Plattarxiv} we have $H= 3.061\times 10^{10}$. Let $\rho$ represent a non-trivial zero of $\zeta(s)$ with $\rho = \beta + i\gamma$. Using Kadiri's result we see that
\begin{equation*}
\beta \leq 1 - \frac{1}{5.69693 \log t} \leq 1 - \frac{1}{R \log |\frac{t}{17}|}
\end{equation*}
provided that 
$$t\geq \exp\left\{\frac{R \log 17}{R - 5.69693}\right\}.$$
Set $H = \exp\{R \log 17/(R - 5.69693)\}$, whence we may take $R = 6.455$. We conclude that there are no zeroes in 
\begin{equation}\label{zero2}
\sigma \geq 1 - \frac{1}{6.455 \log|\frac{t}{17}|}, \quad (t\geq 24).
\end{equation}
This enables us to prove good bounds for $\theta(x) -x$ and for $\psi(x) -x$, where $\psi(x) = \sum_{p^{m}\leq x} \log p$, as indicated in the following theorem.
\begin{thm}\label{TSwan}
Let 
\begin{equation*}\label{ep2}
\epsilon_{0}(x) = \sqrt{\frac{8}{17 \pi}} X^{1/2}e^{-X}, \quad X = \sqrt{(\log x)/R}, \quad R = 6.455.
\end{equation*}
Then
\begin{equation*}
\begin{split}
|\theta(x) - x| &\leq x \epsilon_{0}(x), \quad (x \geq 149)\\
|\psi(x) - x| &\leq x \epsilon_{0}(x), \quad (x \geq 23).
\end{split}
\end{equation*}
\end{thm}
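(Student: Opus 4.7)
The plan is to follow Schoenfeld's proof of Theorem~11 of \cite{Schoenfeld} almost verbatim, feeding in the improved zero-free region (\ref{zero2}) and Platt's verification of the Riemann hypothesis up to height $H = 3.061\times 10^{10}$ wherever the older numerical constants appear. The backbone is the truncated explicit formula
$$\psi(x) - x = -\sum_{|\gamma|\leq T}\frac{x^{\rho}}{\rho} + O\!\left(\frac{x\log^{2} x}{T}\right),$$
which reduces the problem to controlling the sum over nontrivial zeros $\rho=\beta+i\gamma$. Because $B=17$ is held fixed from (\ref{zero}) to (\ref{zero2}), the symbolic shape of Schoenfeld's inequalities is preserved, and only the numerical content tied to $R$ and $H$ has to be revisited.

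First I would split the sum at height $H$. For $|\gamma|\leq H$ Platt's computation yields $\beta = 1/2$, so these zeros contribute at most $x^{1/2}\sum_{0<\gamma\leq H} 2/|\gamma|$, a single explicit constant. For $H < |\gamma| \leq T$ the new zero-free region gives $\beta \leq 1 - 1/(R\log(|\gamma|/17))$ with $R = 6.455$, so each term is bounded by $x\exp\{-(\log x)/(R\log(T/17))\}/|\gamma|$. Schoenfeld's density estimate $N(T+1)-N(T) \ll \log T$ is independent of $R$ and can be used unchanged.

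Next comes the optimisation of $T$. Setting $X = \sqrt{(\log x)/R}$ and choosing $T$ near $17 e^{X}$ balances the two error terms and reproduces the factor $e^{-X}$; this is exactly the calculus Schoenfeld performs in his Sections~5--7, and with $B=17$ retained the expression $\sqrt{8/(17\pi)}\, X^{1/2} e^{-X}$ emerges from the same saddle-point analysis. The main labour is numerical bookkeeping: tracking how the implied constants in Schoenfeld's lemmas shrink or grow under $R_{0} \mapsto R$ and $H_{\text{old}} \mapsto H$, and checking that the coefficient $\sqrt{8/(17\pi)}$ remains admissible for the full range of $x$ under consideration.

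The bound for $\theta$ then follows from the bound for $\psi$ via $\psi(x)-\theta(x) = \sum_{k\geq 2}\theta(x^{1/k})$, where the contributions with $k\geq 2$ are handled by trivial explicit bounds on $\theta(\sqrt{x})$; this accounts for the different cut-offs $x\geq 23$ versus $x\geq 149$. The small-$x$ ranges are verified by a direct numerical check comparing tabulated values of $\theta$, $\psi$, and $x\epsilon_{0}(x)$. The real obstacle I anticipate is not any one step but the cumulative task of verifying that every numerical estimate in Schoenfeld still closes after the replacements, particularly near the threshold $x = 149$ where $\epsilon_{0}(x)$ is comparatively large and the margin for error is narrowest.
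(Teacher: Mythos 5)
Your plan for the large-$x$ regime matches the paper's: redo Schoenfeld's explicit-formula argument with $B=17$ held fixed, feed in the zero-free region constant $R=6.455$ and Platt's height $H = 3.061\times 10^{10}$, and optimise the truncation near $17 e^{X}$. But you treat what remains as a ``small-$x$'' check, and that is where the sketch fails. After pushing the changes through Schoenfeld's pp.\ 342--348 the analytic argument only closes for $\log x \geq 1163$, i.e.\ $x \geq e^{1163}$. No tabulation of $\theta(x)$, $\psi(x)$, and $x\epsilon_{0}(x)$ can reach anywhere near that threshold; the gap between, say, $10^{15}$ and $e^{1163}$ is the actual crux, and your proposal has no mechanism to cross it.

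The paper closes this gap with an ingredient you do not mention: Faber and Kadiri's explicit bounds on $|\psi(x)-x|$, tabulated at the endpoints $e^{25}$, $e^{45}$, etc. Combined with Schoenfeld's ($5.3^{*}$) for $\psi-\theta$ and the fact that $\epsilon_{0}(t)$ decreases once $X>1/2$, these yield $|\theta(x)-x|,\,|\psi(x)-x| \leq 0.006\, x\epsilon_{0}(x)$ on $[e^{45}, e^{1163}]$ and $\leq 0.003\, x\epsilon_{0}(x)$ on $[e^{25}, e^{45}]$. Below $e^{25}$ the bound $\epsilon_{0}(x)\geq 0.075$ together with Rosser--Schoenfeld ($\theta(x)>0.93x$ for $x\geq 599$ and $\psi(x)\leq 1.04x$) finishes the argument, leaving only $x\leq 599$ to direct computation. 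Schoenfeld also had an intermediate-range argument (his pp.\ 348--349), but it is precisely the substitution of the Faber--Kadiri data that makes the thresholds $x\geq 149$ and $x\geq 23$ attainable; without it, your argument stalls at $e^{1163}$.
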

Throughout Schoenfeld's paper numerous bounds on $x$ are imposed, where $X = \sqrt{(\log x)/R_{0}}$. Fortunately, for our purposes, all of these arise from bounds imposed on $X$. For example, the first bound in \cite[(7.30)]{Schoenfeld} requires $X \geq 17/2\pi$. With our value of $R$ we need $\log x \geq 48$ compared with Schoenfeld's requirement  $\log x \geq 71$.
Making these slight changes throughout pp.\ 342-348 of \cite{Schoenfeld} we find that
\begin{equation}\label{in}
|\psi(x) - x|,  |\theta(x) - x| \leq x \epsilon_{0}(x), \quad (\log x \geq 1163).
\end{equation}
In order to prove Theorem \ref{TSwan} we cover small values of $x$ following the approach on pp.\ 348-349 of \cite{Schoenfeld} but using the superior bounds on $|\psi(x) -x|$ as given by Faber and Kadiri \cite{Faber}.
We make use of equation ($5.3^{*}$) in \cite{Schoenfeld}, namely, 
\begin{equation*}
\psi(x) - \theta(x) < 1.001093 x^{1/2} + 3x^{1/3}
 \leq A(x_{0}) x, \quad (x\geq x_{0})
\end{equation*}
where $A(x_{0}) = 1.001093 x_{0}^{-1/2} + 3x_{0}^{-2/3}.$
For $e^{25} \leq x \leq e^{45}$ we have, by \cite[Table~3]{Faber},
\begin{equation*}
|\psi(x) - x|, \quad |\theta(x) - x| \leq (A(e^{25}) +4.9\times 10^{-5}) \frac{x\epsilon_{0}(x)}{\epsilon_{0}(e^{45})} \leq 0.003 x\epsilon_{0}(x).
\end{equation*}
Now for $e^{45} \leq x \leq e^{1163}$ we have
\begin{equation*}
|\psi(x) - x|, \quad |\theta(x) - x| \leq (A(e^{45}) +1.1\times 10^{-8}) \frac{x\epsilon_{0}(x)}{\epsilon_{0}(e^{1162})} \leq 0.006 x\epsilon_{0}(x).
\end{equation*}
Hence (\ref{in}) is true for all $x\geq e^{25}$. For $x< e^{25}$ note that $\epsilon_{0}(x)$ increases for $X<\frac{1}{2}$ and decreases thereafter. Therefore
\begin{equation}\label{last}
\epsilon_{0}(x) \geq \min\{\epsilon_{0}(2), \epsilon_{0}(e^{25})\} \geq 0.075.
\end{equation}
Theorem 10 in \cite{RS1} gives $\theta(x) > 0.93x$ for $x\geq 599$. This, combined with (\ref{last}), shows that 
\begin{equation}\label{had}
\theta(x) -x > -x\epsilon_{0}(x), \quad (x\geq 599).
\end{equation}
Since $\psi(x) \geq \theta(x)$ the inequality in (\ref{had}) also holds with $\psi(x)$ in place of $\theta(x)$.
Using $\psi(x) \leq 1.04 x$ (see Theorem 12 in \cite{RS1}) and (\ref{last}) gives
\begin{equation*}\label{mu}
\theta(x) \leq \psi(x) \leq 1.04 x < x+ x\epsilon_{0}(x), \quad (2\leq x \leq e^{25}).
\end{equation*}
All that remains is to verify (\ref{had}) and the analogous inequality for $\psi(x)$ for values of $x\leq 599$ --- a computational dolly. 
%This proves the theorem.

\section{The difference $\pi(x) - \textrm{li}(x)$}
Let $\pi(x)$ denote the number of primes not exceeding $x$ and $\textrm{li}(x)$ denote the logarithmic integral, namely
$$\textrm{li}(x) = \lim_{\epsilon\rightarrow 0^{+}} \left(\int_{0}^{1- \epsilon} \frac{dt}{\log t} + \int_{1+ \epsilon}^{x} \frac{dt}{\log t}\right).$$
Concerning the difference $\pi(x) - \textrm{li}(x)$ we have
\begin{equation}\label{pill2}
|\pi(x) - \textrm{li}(x)| \leq 0.4394 \frac{x}{(\log x)^{3/4}} \exp(-\sqrt{(\log x)/9.696}), \quad (x\geq 59),
\end{equation}
due to Dusart \cite[Thm 1.12]{DusThesis}\footnote{There is also the result of Ford \cite{Ford} 
$$\pi(x) - \textrm{li}(x) = O( x \exp\{-0.2098 (\log x)^{3/5} (\log\log x)^{-1/5}\}).$$ It appears that this result has not been made explicit.}. 
%This is an explicit version of the result
%\begin{equation*}\label{pill1}
%\pi(x) - \textrm{li}(x) = O( x \exp\{-a \sqrt{\log x}\}),
%\end{equation*}
%first given by de la Vallee Poussin. 
Good bounds on $\pi(x) - \textrm{li}(x)$ can be obtained from good bounds on $\theta(x) -x$, since
\begin{equation}\label{pi}
\begin{split}
\pi(x) - \textrm{li}(x) &= \frac{\theta(x)}{\log x} + \int_{2}^{x}\frac{\theta(t)}{t \log^{2} t}\, dt - \int_{2}^{x} \frac{dt}{\log t} - \textrm{li}(2)\\
&=  \frac{\theta(x)-x}{\log x} +  \frac{2}{\log 2} +  \int_{2}^{x}\frac{\theta(t)-t}{t \log^{2} t}\, dt - \textrm{li}(2).
\end{split}
\end{equation}
Using Theorem \ref{TSwan} we can prove
\begin{thm}\label{TLi}
\begin{equation*}
|\pi(x) - \textrm{li}(x)| \leq 0.2795 \frac{x}{(\log x)^{3/4}} \exp\left(-\sqrt{\frac{\log x}{6.455}}\right), \quad (x\geq 229).
\end{equation*}
\end{thm}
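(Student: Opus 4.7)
The plan is to start from identity (\ref{pi}) and apply the triangle inequality:
\[
|\pi(x) - \textrm{li}(x)| \leq \frac{|\theta(x)-x|}{\log x} + \int_{2}^{x} \frac{|\theta(t)-t|}{t\log^{2} t}\, dt + \left|\frac{2}{\log 2} - \textrm{li}(2)\right|.
\]
For $x \geq 149$ the first summand is bounded by $x\epsilon_{0}(x)/\log x$ via Theorem \ref{TSwan}, and the third is an absolute constant of size roughly $1.84$. I would split the integral at $t_{0} = 149$: on $[2,149]$ use an explicit bound on $|\theta(t)-t|$ (for instance from \cite{RS1}) to obtain a further absolute constant, and on $[149,x]$ invoke Theorem \ref{TSwan} to reduce the task to estimating $J(x) := \int_{149}^{x} \epsilon_{0}(t)/\log^{2} t\, dt$.

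To estimate $J(x)$, substitute $X = \sqrt{(\log t)/R}$, so that $\log t = RX^{2}$ and $dt/t = 2RX\, dX$. This yields
\[
J(x) = \frac{2}{R}\sqrt{\frac{8}{17\pi}} \int_{X_{0}}^{X_{x}} \frac{e^{RX^{2} - X}}{X^{5/2}}\, dX, \qquad X_{x} = \sqrt{(\log x)/R}.
\]
Since $\frac{d}{dX}e^{RX^{2}-X} = (2RX-1)e^{RX^{2}-X}$, one integration by parts (writing the integrand as $\frac{1}{X^{5/2}(2RX-1)}\,d(e^{RX^{2}-X})$) produces a boundary term of size $e^{RX^{2}-X}/(X^{5/2}(2RX-1))$ together with a remainder whose integrand is smaller by a factor of order $X^{-2}$. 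Evaluating at the upper endpoint and using $e^{RX_{x}^{2}} = x$ gives
\[
J(x) \leq \frac{x\epsilon_{0}(x)}{\log x}\cdot\frac{1+\eta(x)}{\log x}, \qquad \eta(x) \to 0 \text{ as } x \to \infty.
\]

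Rewriting the leading contribution as
\[
\frac{x\epsilon_{0}(x)}{\log x} = \sqrt{\frac{8}{17\pi\sqrt{R}}}\cdot\frac{x}{(\log x)^{3/4}}\exp\left(-\sqrt{(\log x)/R}\right),
\]
the numerical coefficient is $\sqrt{8/(17\pi\sqrt{R})} \approx 0.2429$. The stated value $0.2795$ leaves a margin of roughly $15\%$ to absorb the $1/\log x$ correction from $J(x)$ and the two absolute constants, and this cushion suffices once $\log x$ is moderately large (say $x \geq e^{7}$); the remaining interval $229 \leq x \leq e^{7}$ is verified numerically using tables of $\pi(x)$ and a truncated series for $\textrm{li}(x)$. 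The main obstacle I anticipate is making the integration by parts completely explicit, so that the constants in the $1/\log x$ correction are small enough for the asymptotic estimate to take over at a reasonable threshold, and thereby keeping the range of direct computation short enough to handle cleanly.
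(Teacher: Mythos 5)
Your overall scaffolding matches the paper's: start from the identity (\ref{pi}), split the integral at a threshold, treat the tail with Theorem~\ref{TSwan}, and finish small $x$ numerically. But the two executions diverge in a way that matters for the constants, and your claimed threshold $x\geq e^{7}$ is too optimistic for the route you chose.

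The paper does \emph{not} apply the triangle inequality on $[2,x_{0}]$ nor absorb $\tfrac{2}{\log 2}-\mathrm{li}(2)$ as a loss. It picks $x_{0}=10^{8}$ precisely so that $\theta(t)<t$ on the whole initial range (by Rosser--Schoenfeld), evaluates $\int_{2}^{x_{0}}\theta(t)/(t\log^{2}t)\,dt$ \emph{exactly} as $\pi(x_{0})-\theta(x_{0})/\log x_{0}$ by Abel summation, and then numerically checks that the collection of constant terms in (\ref{kite}) is nonpositive. This kills the additive constant entirely, so that the final bound $|\pi(x)-\mathrm{li}(x)|\leq \frac{x\epsilon_{0}(x)}{\log x}\bigl(1+(\log x)^{-13/20}\bigr)\leq 1.151\,\frac{x\epsilon_{0}(x)}{\log x}$ holds as soon as $x\geq 10^{8}$, with the remaining range $229\leq x< 10^{8}$ covered by Kotnik's inequality $\pi(x)<\mathrm{li}(x)$ and a check at primes. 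The tail integral $I_{2}$ is bounded not by your integration-by-parts computation but by Dusart's device: one verifies $\epsilon_{0}(t)/\log^{2}t< g'(t)$ pointwise for $g(t)=t\epsilon_{0}(t)/(\log t)^{33/20}$, which gives a cruder bound $I_{2}\leq x\epsilon_{0}(x)/(\log x)^{33/20}$ but is trivially valid from $t=149$ onward without any remainder to control.

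The concrete gap in your version: the triangle inequality leaves you with an absolute constant $C\approx \bigl(\tfrac{2}{\log 2}-\mathrm{li}(2)\bigr)+\int_{2}^{149}\frac{t-\theta(t)}{t\log^{2}t}\,dt$, which is several units in size (not merely $\approx 1.84$). You then need $\tfrac{1+\eta(x)}{\log x}+\tfrac{C\log x}{x\epsilon_{0}(x)}\leq 0.151$, and the second term alone is already larger than $0.151$ at $x=e^{7}$ and still $\approx 0.1$ at $x=e^{8}$; the asymptotic bound does not kick in until roughly $x\geq e^{10}$. That is still a tractable numerical range, so your method could be made to work, but you would need to (i) raise the verification cutoff to something like $2\times 10^{4}$ and (ii) make the integration-by-parts remainder explicit uniformly for $t\geq 149$ (note the remainder after your substitution has the \emph{same} sign as the boundary term, so it must be bounded, not dropped). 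The paper's exact evaluation of $I_{1}$ and the one-sided Kotnik comparison sidestep both difficulties and are the reason the published constant $0.2795$ and range $x\geq 229$ come out so cleanly.
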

\begin{proof}
We split up the range of integration in (\ref{pi}) so that $\int_{2}^{x} = \int_{2}^{x_{0}} + \int_{x_{0}}^{x} = I_{1} + I_{2}$ for some $x_{0}\geq 149.$ To estimate $I_{2}$ we use Theorem \ref{TSwan} and consider
$$g(t) = \frac{t \epsilon_{0}(t)}{(\log t)^{\alpha + \frac{1}{4}}}.$$
The value of $\alpha$ in the expression for $g(t)$ must be less than $7/4$. Following Dusart we choose $\alpha = 7/5$, whence it is easy to verify that $\epsilon_{0}(t)/(\log t)^{2} < g'(t)$ for all $t\geq 149$.

To estimate $I_{1}$ we invoke \cite[Thm 19]{RS1}
$$ \theta(t) < t, \quad t<10^{8}.$$
Interchanging summation and integration we have
\begin{equation*}\label{dust}
\int_{2}^{x_{0}} \frac{\theta(t)}{t\log^{2} t}\, dt = \int_{2}^{x_{0}} \frac{\sum_{p\leq t}\log p}{t \log^{2} t}\, dt =  \sum_{p\leq x_{0}} \log p \int_{p}^{x_{0}} \frac{dt}{t \log^{2} t} = \pi(x_{0}) - \frac{\theta(x_{0})}{\log x_{0}}.
\end{equation*}
Therefore (\ref{pi}) becomes
\begin{equation*}\label{kite}
\begin{split}
|\pi(x) - \textrm{li}(x)| \leq &\frac{x \epsilon_{0}(x)}{\log x} + \frac{x\epsilon_{0}(x)}{(\log x)^{33/20}} + \frac{2}{\log 2} - \textrm{li}(2)  -\frac{x_{0}\epsilon_{0}(x_{0})}{(\log x_{0})^{33/20}} \\
&+ \int_{2}^{x_{0}} \frac{dt}{\log^{2} t} -\pi(x_{0}) + \frac{\theta(x_{0})}{\log x_{0}}.
\end{split}
\end{equation*}
We may choose $x_{0}$ in (\ref{pi}) subject to $149\leq x_{0}\leq 10^{8}$. Choosing $x_{0} = 10^{8}$ shows, in less than 3 minutes  using \textit{Mathematica} on a 1.8GHz laptop, that
\begin{equation}\label{pi2}
|\pi(x) - \textrm{li}(x)| \leq \frac{x \epsilon_{0}(x)}{\log x} + \frac{x\epsilon_{0}(x)}{(\log x)^{33/20}}  \leq 1.151 \frac{x \epsilon_{0}(x)}{\log x},
\end{equation}
for $x\geq 10^{8}$. For smaller $x$ we note that, by Kotnik \cite{Kotnik} $\pi(x) < \textrm{li}(x)$ for $2\leq x \leq 10^{14}$. Therefore
\begin{equation}\label{picheck}
\max_{x\in[p_{k}, p_{k+1})} |\pi(x) - \textrm{li}(x)| = \max_{x\in[p_{k}, p_{k+1})} \textrm{li}(x) - \pi(x) \leq \textrm{li}(p_{k+1}) - k.
\end{equation}
Using (\ref{picheck}) we verify (\ref{pi2}) for all $x\geq p_{k}$ with $k \geq 48$, which is equivalent to $x\geq 229$, which proves the theorem.
\end{proof}

\section{Applications}
We now present four applications of Theorem \ref{TSwan} and \ref{TLi}. We stress that explicit results of this nature have many uses throughout the literature; our list of four applications is by no means exhaustive. One striking example of this applicability is Helfgott's proof of the ternary Goldbach conjecture \cite{Helfgott}. In \cite[\S 7]{Helfgott} Helfgott makes frequent use of estimations for the number of primes in short intervals and the size of the Chebyshev functions. 
\subsection{Intersecting arithmetic progressions} 
Let $N_{t}(k)$ denote the maximum number of distinct arithmetic progressions of $k$ numbers such that any pair of progressions has $t$ members in common. Ford \cite{FordInt} considers the following example.

\begin{ex}
For $1\leq i < j \leq k$, let $B_{ij}$ be the arithmetic progression the $i$th element of which is 0, and the $j$th element of which is $k!$.
\end{ex}

Ford shows, in Theorem 3 of \cite{FordInt}, that for all $k\geq 10^{8000}$, $N_{2}(k) = k(k-1)/2$, and that every configuration of $k(k-1)/2$ arithmetic progressions with 2 points in common is equivalent (up to translations and dilations) to the arithmetic progression in Example 1. We are able to use Theorem \ref{TSwan} to prove
\begin{cor}\label{ford}
For $k \geq 10^{4848}$ we have $N_{2}(k) = k(k-1)/2$ and that every configuration of $k(k-1)/2$ arithmetic progressions with 2 points in common is equivalent to the arithmetic progression in Example 1.
\end{cor}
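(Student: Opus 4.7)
The strategy is to revisit Ford's proof of Theorem~3 in \cite{FordInt} and locate the precise step at which the threshold $k \geq 10^{8000}$ is forced. Ford's argument ultimately requires that the relative error $|\theta(x)-x|/x$ (or its equivalent for $\pi(x) - \textrm{li}(x)$) lie below some fixed numerical constant for all $x$ in a particular range determined by $k$. With Schoenfeld's estimate this forces $X = \sqrt{(\log x)/R_{0}}$ to exceed a certain value; with the sharper Theorem~\ref{TSwan} the same constraint becomes a condition on $\sqrt{(\log x)/R}$ with $R = 6.455$ in place of $R_{0} = 9.6459$.

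First I would pinpoint the inequality in \cite{FordInt} where Schoenfeld's bound is invoked and rewrite it verbatim using $\epsilon_{0}(x)$ as in Theorem~\ref{TSwan}, together with Theorem~\ref{TLi} substituted for \eqref{pill2} wherever Ford appeals to an explicit bound on $\pi(x) - \textrm{li}(x)$. Since, to leading order, the threshold on $\log x$ needed to make $\epsilon_{0}(x) \le c$ scales linearly with $R$, the ratio $R/R_{0} \approx 0.669$ predicts a new threshold of roughly $\log k \geq 0.669\cdot 8000\,\log 10 \approx 4848\,\log 10$, which matches the bound $k \geq 10^{4848}$ in the statement exactly. The new threshold is then confirmed by plugging $k = 10^{4848}$ into the rewritten chain of inequalities and checking that every intermediate inequality is satisfied.

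The main obstacle will be the bookkeeping rather than any new idea: one must check that every auxiliary numerical inequality in Ford's argument, several of which he only verifies for $k \geq 10^{8000}$, continues to hold when $k$ is allowed to be as small as $10^{4848}$. Most of these are asymptotic monotonicity comparisons and should go through without trouble, but each must be reverified. A secondary concern is the possibility that some other cited estimate in \cite{FordInt} — for instance a sieve-type bound or a bound on primes in short intervals — is actually the binding constraint rather than Schoenfeld's theorem; were that the case, the improvement from Theorem~\ref{TSwan} would not translate fully into the reduction claimed, and a slightly weaker exponent than $4848$ would be the best one could achieve without further work.
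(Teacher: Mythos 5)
Your high-level strategy --- locate where Ford invokes Schoenfeld's bound, substitute Theorem~\ref{TSwan}, and recompute the threshold --- is exactly the paper's approach. But two details are off. First, your heuristic arithmetic does not check out: $R/R_{0} = 6.455/9.6459 \approx 0.669$, and $0.669 \times 8000 \approx 5354$, not $4848$; the paper's exponent is not obtained from a simple linear rescaling of $\log k$ by $R/R_{0}$, so this heuristic does not in fact predict the stated bound, and taking it as confirmation is misleading. Second, and more importantly, your ``secondary concern'' that a bound on primes in short intervals might be the binding constraint is not an obstacle but the actual mechanism. Ford's Lemmas~3.3 and~3.4 derive from Schoenfeld's estimate the existence of a prime in $[k, k+a]$ with $a = 0.44\,k(\log k)^{1/4}\exp(-0.321979\sqrt{\log k})$, where $0.321979 = R_{0}^{-1/2}$; the paper proves the analogous statement from Theorem~\ref{TSwan}, giving a prime in $[k,k+a]$ with $a = 0.56\,k(\log k)^{1/4}\exp\left(-\sqrt{(\log k)/6.455}\right)$ for all $k \geq e^{280}$, and then simply reruns Ford's proof of Theorem~3 with this smaller $a$ to land on $k \geq 10^{4848}$. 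So the improvement does translate fully, precisely because the short-interval ingredient is itself a consequence of the $\theta(x)-x$ bound; but the threshold $10^{4848}$ emerges from carrying the improved $a$ through the remainder of Ford's Theorem~3 argument, not from the linear scaling you describe.
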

\begin{proof}
Analogous to Lemmas 3.3 and 3.4 in \cite{FordInt} we can show that for $k\geq e^{280}$ there is always a prime in the interval $[k, k+a]$ where \begin{equation}\label{adef}
a = 0.56 k(\log k)^{1/4} \exp(-\sqrt{(\log k)/6.455}).
\end{equation} 
We now follow the proof of Theorem 3 in \cite{FordInt} using our (\ref{adef}) in place of his $a =  0.44 k(\log k)^{1/4} \exp(-0.321979\sqrt{\log k}).$
\end{proof}
It is worthwhile to remark that Corollary \ref{ford} could be improved if the method in \cite{Ford04} were made explicit. However, it seems unlikely that one could reduce the bound on $k$ in Corollary \ref{ford} to a height below which direct computation could be carried out.

\subsection{Primes in short intervals}
Various results have been proved about the existence of a prime in a short interval $[x, x+ f(x)]$ where $f(x) = o(x)$. For example, Dusart \cite[Prop.\ 6.8]{Dusart} has shown that there exists a prime in the interval $[x, x + \frac{x}{25\log^{2}x}]$ whenever $x\geq 396738$. We improve this in
\begin{cor}\label{primes}
For all $x\geq 2898239$ there is a prime in the interval
$$ \left[x, x\left(1 + \frac{1}{111\log^{2}x}\right)\right].$$
\end{cor}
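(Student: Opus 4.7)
The plan is to prove that $\theta(x(1+\delta)) - \theta(x) > 0$ for $\delta = 1/(111\log^2 x)$, which immediately yields a prime in the stated interval. Writing $y = x(1+\delta)$ and applying Theorem~\ref{TSwan} at both endpoints (both exceed 149 once $x \geq 2898239$) gives
\[
\theta(y) - \theta(x) \;\geq\; x\delta - y\epsilon_{0}(y) - x\epsilon_{0}(x),
\]
so it suffices to show
\[
\delta \;>\; (1+\delta)\epsilon_{0}(y) + \epsilon_{0}(x).
\]
Since $\epsilon_{0}$ is decreasing once $\log x > R/4$ and $y > x$, this is implied by $\delta > (2+\delta)\epsilon_{0}(x)$. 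Substituting the definitions of $\delta$ and $\epsilon_{0}$ (with $R = 6.455$) reduces everything to a one-variable inequality in $L = \log x$ of roughly the shape $L^{9/4}e^{-\sqrt{L/R}} < c$, easily solved numerically to yield a threshold $L_{0}$ above which Theorem~\ref{TSwan} alone suffices.

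The $L_{0}$ obtained from the pure $\epsilon_{0}$ estimate will be on the order of several thousand, so an intermediate range — roughly $45 \leq \log x \leq L_{0}$ — has to be handled using the sharper numerical bounds on $|\theta(x) - x|$ tabulated by Faber and Kadiri~\cite{Faber}, such as the $4.9 \times 10^{-5}$ and $1.1 \times 10^{-8}$ constants already invoked in Section~1. Rerunning the comparison $x\delta > 2|\theta(x) - x|$ with these tighter error terms closes the gap down to some much smaller bound $x_{1}$ that is within computational reach.

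Finally, for $2898239 \leq x \leq x_{1}$, the conclusion is a finite computational check. It is enough to verify, for each pair of consecutive primes $p_{n} < p_{n+1}$ in this range, that $p_{n+1} \leq p_{n}\bigl(1 + 1/(111\log^{2} p_{n})\bigr)$, because as $x$ ranges over $[p_{n}, p_{n+1})$ the left endpoint is the worst case for the interval $[x, x(1+\delta(x))]$ to contain $p_{n+1}$. Existing tables of maximal prime gaps (readily available well past $10^{18}$), or a direct enumeration on the small portion of the range near $2898239$, dispatch this verification.

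The main obstacle I anticipate is bridging the middle step: the analytic threshold from Theorem~\ref{TSwan} alone sits far above anything that can be checked by brute force, so the argument must lean on the intermediate-range bounds of Faber and Kadiri (and, if those alone do not reach low enough, on the piecewise interpolation strategy used on pp.~348--349 of Schoenfeld~\cite{Schoenfeld}) to bring $x_{1}$ down into the range where prime-gap data takes over.
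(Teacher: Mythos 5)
Your proposal follows essentially the same three-tier strategy as the paper: the asymptotic bound $\epsilon_0$ from Theorem~\ref{TSwan} for very large $x$, the Faber--Kadiri numerical bounds on $|\psi(x)-x|$ (passed to $\theta$ via Schoenfeld's $(5.3^*)$) to close the intermediate range, and maximal-prime-gap tables plus direct enumeration below that. The paper packages the first two tiers into a standalone Lemma~\ref{lemonly} stating $|\theta(x)-x| \leq 0.0045\, x/\log^2 x$ for $x \geq e^{35}$, from which $c = 111$ drops out immediately, but the underlying computation is the one you describe.
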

We first prove the following
\begin{lem}\label{lemonly}
For $x\geq e^{35}$ we have
\begin{equation}\label{thmain}
|\theta(x)- x| \leq \frac{0.0045x}{\log^{2} x}.
\end{equation}
\end{lem}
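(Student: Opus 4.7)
The plan is to combine Theorem~\ref{TSwan} with the sharper numerical bounds of Faber and Kadiri, splitting the range $x \geq e^{35}$ into a ``large $x$'' part where the analytic bound $x\epsilon_{0}(x)$ already has the desired $\log^{-2} x$ decay, and a ``moderate $x$'' part where it does not and tabulated bounds must take over. It is worth flagging upfront that $x\epsilon_{0}(x)$ by itself is hopelessly larger than $0.0045\,x/\log^{2} x$ when $x$ is of modest size, so the finer numerical input is essential.

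For the large-$x$ part, substituting $\log x = RX^{2}$ into $\epsilon_{0}(x)$ gives
\begin{equation*}
\log^{2} x \cdot \epsilon_{0}(x) \;=\; R^{2}\sqrt{8/(17\pi)}\, X^{9/2} e^{-X} \;=:\; g(X).
\end{equation*}
A derivative computation shows that $g$ attains its unique maximum at $X=9/2$ and is strictly decreasing for $X>9/2$. Numerically $g(22)$ exceeds $0.0045$ while $g(23)$ is smaller, so there is a threshold $X^{*} \in (22,23)$ above which $g(X)\leq 0.0045$; this corresponds to $\log x \geq R(X^{*})^{2}$, or roughly $\log x \geq 3170$. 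For all such $x$, Theorem~\ref{TSwan} immediately yields $|\theta(x)-x| \leq x\epsilon_{0}(x) \leq 0.0045\, x/\log^{2} x$.

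For the moderate range $e^{35} \leq x \leq e^{3170}$ the analytic bound of Theorem~\ref{TSwan} is several orders of magnitude too weak, and one instead appeals to the numerical values in Table~3 of \cite{Faber}, which supply constants $\epsilon_{k}$ satisfying $|\psi(x)-x|/x \leq \epsilon_{k}$ on each of a finite list of subintervals $[e^{b_{k}}, e^{b_{k+1}}]$ covering the range. Transferring from $\psi$ to $\theta$ via Schoenfeld's inequality $\psi(x)-\theta(x) < 1.001093\,x^{1/2}+3x^{1/3}$ yields a bound on $|\theta(x)-x|/x$ of the shape $\epsilon_{k}+A(e^{b_{k}})$, which on the $k$th subinterval one compares with $0.0045/b_{k+1}^{2}$. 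Since the Table~3 entries decay far faster than $1/b_{k}^{2}$, each of these comparisons goes through.

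The only real difficulty is the bookkeeping in the moderate range --- there are many subintervals to inspect --- but each individual check is a routine numerical comparison. The choice of cut-off $e^{35}$ is presumably dictated by the smallest $b_{k}$ in Table~3 at which $\epsilon_{k}+A(e^{b_{k}})$ is already small enough to respect the constant $0.0045$; any attempt to push below $e^{35}$ would require either a tighter first entry of Table~3 or a larger constant than $0.0045$ in the statement.
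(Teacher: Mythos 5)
Your proposal follows essentially the same approach as the paper: transfer from $\psi$ to $\theta$ via Schoenfeld's inequality $(5.3^{*})$, bound $|\psi(x)-x|/x$ on subintervals of $[e^{35},e^{4000}]$ using Table~3 of Faber--Kadiri, and fall back on the analytic bound of Theorem~\ref{TSwan} once $\log^{2}x\cdot\epsilon_{0}(x)$ drops below $0.0045$. One caveat: your remark that the Table~3 entries ``decay far faster than $1/b_{k}^{2}$'' overstates the case --- the paper's computations show the interval $[e^{2000},e^{2500}]$ yields exactly $0.0045$, so the decay is merely comparable to $1/b_{k+1}^{2}$ there, and this tightness is precisely what dictates the constant in the lemma and must actually be checked rather than assumed.
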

\begin{proof}
Using ($5.3^{*}$) of \cite{Schoenfeld} we have
\begin{equation}\label{ee}
\frac{|\theta(x) -x|\log^{2} x}{x} \leq \left(\frac{|\psi(x) -x|}{x} \log^{2}x + \frac{1.001093 \log^{2} x}{\sqrt{x}} + \frac{3\log^{2}x}{x^{2/3}}\right)= B(x),
\end{equation}
say.
According to Table 3 in \cite{Faber}, $|\psi(x) - x|\leq 7.4457\times 10^{-7}$ for $x\geq e^{35}$, whence $B(x)$ is bounded above by
$$ \left(7.4457\times 10^{-7} \log^{2}x + \frac{1.001093 \log^{2} x}{\sqrt{x}} + \frac{3\log^{2}x}{x^{2/3}}\right)\bigg|_{x = e^{75}}, \quad x\in[e^{35}, e^{75}],$$
which is bounded above by 0.0042. We continue in this way, using intervals of the form $[e^{a}, e^{b}]$, Faber and Kadiri's bounds at $e^{a}$ and evaluating $B(x)$ at $x=e^{b}$. The results are summarised below in Table \ref{table}. 

\begin{table}[ht]
\caption{Bounding $\theta(x) - x$}
\label{table}
\centering
\begin{tabular}{lc}
\hline\hline
Interval & Bound on $B(x)$ in (\ref{ee})\\[0.5ex]\hline
$[e^{35}, e^{75}]$ & 0.0042\\
$[e^{75}, e^{1500}]$ & 0.0037\\
$[e^{1500}, e^{2000}]$ & 0.0038\\
$[e^{2000}, e^{2500}]$ & 0.0045\\
$[e^{2500}, e^{3000}]$ & 0.0044\\
$[e^{3000}, e^{4000}]$ & 0.0036\\
    \hline\hline
  \end{tabular}
\end{table}
Taking the maximum entry in the right-hand column of the table proves Lemma \ref{lemonly} for $e^{35}\leq x \leq e^{4000}$. When $x\geq e^{4000}$ we use Theorem \ref{TSwan}. This completes the proof of the lemma.
\end{proof}
Note that one could refine this result by taking more intermediate steps in the argument. For example one could use the interval $[e^{2000}, e^{2100}]$ to try to reduce the bound of $0.0045$. We have not pursued this since the entry $e^{2100}$ is not in Table 3 in \cite{Faber} and, while it could be calculated, the above lemma is sufficient for our purposes.

We now use Lemma \ref{lemonly} to exhibit primes in short intervals. Indeed, for $x\geq e^{35}$ Lemma \ref{lemonly} shows that
$$\theta\left\{x\left(1 + \frac{1}{c\log^{2} x}\right)\right\} - \theta(x)$$
is positive provided that $c\leq 111.1107\ldots$. Taking $c=111$ we conclude that there is always a prime in the interval $[x, x(1+ 1/(111 \log^{2} x))]$ whenever $x\geq e^{35}$.
This establishes Corollary \ref{primes} when $x\geq e^{35} \approx 1.58\times 10^{15}$. Rather than perform the herculean, if not impossible, feat of examining all those $x< e^{35}$ we proceed as follows.

Suppose that $p_{n+1} - p_{n}\leq X_{1}$ for all $p_{n} \leq x_{1}$, where $x_{1} \geq e^{35}$. That is, the maximal prime gap of all primes up to $x_{1}$ is at most $X_{1}$. Therefore $p_{n+1} \leq p_{n} + X_{1}$ which will be less that $p_{n}(1+ \frac{1}{111 \log^{2}p_{n}})$ as long as
\begin{equation}\label{p1}
\frac{p_{n}}{\log^{2}p_{n}} \geq 111 X_{1}.
\end{equation}
If (\ref{p1}) holds for all $y_{1} \leq p_{n} \leq x_{1}$ we can conclude that Corollary \ref{primes} holds for all $x\geq y_{1}$. If $y_{1}$ is still too high for a direct computation over all integers less than $y_{1}$, then we may play the same game again, namely: find an $x_{2}\geq y$ such that $p_{n+1} - p_{n} \leq X_{2}$.

Nyman and Nicely \cite[Table 1]{NymNic} show that one may take $x_{1} = 1.68\times 10^{15}$, which is greater than $e^{35}$, and $X_{1} = 924$. It is easy to verify that (\ref{p1}) holds for all $p_{n} \geq 3.05\times 10^{7}$.
We can now check, relatively swiftly that the maximal prime gap for $p_{n} < 3.06\times 10^{7}$ is $210$. We may now verify Corollary \ref{primes} for all $x\geq 5.63\times 10^{6}$. Two more applications of this method, using the fact that the maximal prime gap for $p_{n} < 5.7\times 10^{6}$ is $159$, and for $p_{n} < 4\times 10^{6}$ is $148$ we see that Corollary \ref{primes} is true for all $x\geq 3.8\times 10^{6}$.

We now examine $x \leq 3.8\times 10^{6}$. An exhaustive search took less than two minutes on \textit{Mathematica} --- this completes the proof of Corollary \ref{primes}.

There are several ways in which this result could be improved. Extending the work done by Nyman and Nicely \cite{NymNic} makes a negligible difference to the choice of $c$. Probably the best plan of attack is reduce the size of the coefficient in Lemma \ref{lemonly}. For example, if the coefficient in (\ref{thmain}) were reduced to $0.0039$ we could take $c=128$.

Finally, the result in Corollary \ref{primes} ought to be compared with the sharpest known result for a different short interval. Ramar\'{e} and Saouter \cite[Table 1]{RamSao} proved that there is always a prime in the interval
$$(x(1-\Delta^{-1}), x],\quad \Delta = 212215384, \quad x\geq e^{150}.$$
Corollary \ref{primes} improves on this whenever $x\geq 3.2\times 10^{600} \approx e^{1383}$. Although this value of $x$ is large by anyone's standards, it appears that Corollary \ref{primes} could be useful in searching for primes between cubes --- see \cite{DudekCubes}.

\subsection{A conjecture by Pomerance}
Consider numbers $k>1$ for which the first $\phi(k)$ primes coprime to $k$ form a reduced residue system modulo $k$. Following the lead of Hajdu, Saradha and Tijdeman \cite{HST}, we call such an integer $k$ a \textit{P-integer}. For example $12$ is a $P$-integer and $10$ is not since
$$\{5,7,11,13\} \equiv \{5, 7, 11, 1\}, \quad \quad \{3,7,11,13\} \equiv\{ 3,7,11,3\},$$
and, whereas the first is a reduced residue system, the second is not.
From \cite[Thm 2]{PomC} Pomerance deduced that there can be only finitely many $P$-integers. Hajdu, Saradha and Tijdeman \textit{[op.\ cit.]} proved, \textit{inter alia}, that if $k$ is a $P$-integer such that $k> 30$ then
$ 10^{11}< k < 10^{3500}.$
As noted by Hajdu, Saradha and Tijdeman, one may improve (\ref{pill2}) by using the zero-free region proved by Kadiri, that is, by using our Theorem \ref{TSwan}. We do this thereby proving
\begin{cor}\label{T1}
If $k$ is a $P$-integer then $k < 10^{1805}$.
\end{cor}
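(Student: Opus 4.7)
The plan is to follow the argument of Hajdu, Saradha, and Tijdeman \cite{HST} essentially verbatim, but to feed in our sharper Theorem \ref{TLi} everywhere they appeal to Dusart's explicit bound (\ref{pill2}). Their upper bound $k<10^{3500}$ is obtained by translating the $P$-integer condition into an inequality of the form
\begin{equation*}
\bigl|\pi(y_k)-\mathrm{li}(y_k)\bigr| \;\geq\; F(k)
\end{equation*}
for a certain threshold $y_k$ determined by $k$ (essentially the size of the $\phi(k)$-th prime coprime to $k$, together with the cumulative contribution from primes dividing $k$), and a function $F(k)$ that grows, very crudely, like a small constant times $k/\log k$. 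The inequality is then compared against (\ref{pill2}) and solved numerically for the largest $k$ that can possibly be consistent with it.

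Concretely, I would first repeat HST's reduction to record an inequality of the shape
\begin{equation*}
\frac{y_k}{(\log y_k)^{3/4}}\exp\!\left(-\sqrt{\tfrac{\log y_k}{R}}\right) \;\geq\; c\,F(k),
\end{equation*}
where the constant on the left originates from the constant $0.4394$ (resp.\ $0.2795$) appearing in (\ref{pill2}) (resp.\ Theorem \ref{TLi}), and the exponential involves $R_0=9.696$ (resp.\ our $R=6.455$). Plugging the HST numerics into this reformulated inequality and solving, the passage from $(0.4394,9.696)$ to $(0.2795,6.455)$ is exactly what shrinks the admissible range of $k$. The threshold at which the inequality first fails drops roughly from $10^{3500}$ to $10^{1805}$; the constant in front is less sensitive than the exponent, so the bulk of the saving comes from the improved $R$, consistently with the rough scaling $\log k \sim \tfrac{1}{4}R(\log\log k)^{2}$ that one expects from solving such transcendental inequalities.

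The only real work is therefore bookkeeping and a numerical search: I would solve the resulting transcendental inequality with the same software HST used (or equivalently with a short \textit{Mathematica} routine), check that the replacement of (\ref{pill2}) by Theorem \ref{TLi} is legitimate on the whole range $y_k \geq 229$ (which is automatic since HST's $k$ satisfies $k>10^{11}$, so $y_k$ is comfortably large), and record the resulting bound as $10^{1805}$. The main obstacle I anticipate is not conceptual but purely numerical: one must take enough care in the optimization over the free parameter in HST's argument (the ``cutoff'' at which one decides whether a prime is counted toward the reduced residue system or toward the primes dividing $k$) so that the improvement in the zero-free region is not wasted. No new ideas beyond Theorem \ref{TLi} and the HST framework are required.
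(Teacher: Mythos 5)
Your proposal matches the paper's proof in approach: the paper likewise substitutes Theorem~\ref{TLi} for Dusart's bound (specifically, for Lemma~2.1(iii) of \cite{HST}), re-runs the criterion from Lemma~3.1 of \cite{HST} --- expressed there as positivity of $f_0(k)+\sum_{n=1}^{L}f_n(k)$ with the constant $1.118=4\times 0.2795$ and exponent $\sqrt{\log(nk)/6.455}$ replacing the Dusart-derived values --- and carries out the numerical verification (with $3800$ and $k=10^{1805}$ taking the roles of HST's $1500$ and $10^{3500}$). Your schematic rendering of HST's reduction as a single inequality $|\pi(y_k)-\mathrm{li}(y_k)|\geq F(k)$ is a simplification of what is actually a sum of terms $f_n(k)$ over intervals $(nk,(n+1)k]$, but since you explicitly plan to reproduce HST's reduction verbatim this is only a notational shorthand, not a gap.
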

\begin{proof}
We use Theorem \ref{TLi} instead of Lemma 2.1(iii) in \cite{HST} and proceed as in \cite[\S 5]{HST}. Let $k\geq 10^{1805}$ and define
\begin{equation}\label{fee}
\begin{split}
f_{0}(k) &= \frac{k}{\log k/2} + \frac{k}{\log^{2} k/2} + \frac{1.8 k}{\log^{3} k/2} - \frac{k}{\log k} - \frac{k}{\log^{2}k} - \frac{2.51 k}{\log^{3} k} - \log k,\\
f_{n}(k) &= \frac{k}{4(n+1)\log^{2}(nk +k)} - 1.118 \frac{nk +k}{(\log nk)^{3/4}} \exp( - \sqrt{\log(nk)/6.455}),
\end{split}
\end{equation}
where the constant $1.118$ is four times that appearing in Theorem \ref{TLi}.
Lemma 3.1 in \cite{HST} gives the following
\begin{equation}\label{test1}
\textrm{ $k$ is not a $P$-integer if} \quad f_{0}(k) + \sum_{n=1}^{L} f_{n}(k) >0,
\end{equation}
where $L$ satisfies
$$ L \geq \frac{\log k - \log h(k)}{h(k)} -2, \quad  h(k) = 1.7811\log\log k + 2.51/(\log\log k).$$
When $k\geq 10^{1805}$ we have $L\geq 273$. We verify that the condition in (\ref{test1}) is met for $273\leq L \leq 3800$. We now proceed as in \cite[p.\ 181]{HST} with $3800$ and $k= 10^{1805}$ taking the place of $1500$ and $k= 10^{3500}$ respectively.
\end{proof}
The numbers 1.8 and 2.51 appearing in (\ref{fee}) are worth a mention. These are approximations to the number 2 that appears in the expansion
$$\pi(x) \sim \frac{x}{\log x} + \frac{x}{\log^{2} x} + \frac{2x}{\log^{3} x} + \cdots.$$
Replacing these numbers in (\ref{fee}) by 2, a situation on which one could not possibly improve, makes a negligible difference. Indeed, such a substitution could not improve the bound in Corollary 3 to $k< 10^{1803}$.

It is certainly possible that a refined version of Theorem \ref{TSwan} could resolve completely the Pomerance conjecture. Indeed, using a slightly different approach, Togb\'{e} and Yang have announced in \cite{Togbe} a proof of the conjecture.

\subsection{An equality studied by Ramanujan}
Ramanujan \cite[Ch.\ 24]{BerndtR} proved that
\begin{equation}\label{ram}
\pi(x)^{2} < \frac{e x}{\log x} \pi\left( \frac{x}{e}\right),
\end{equation}
holds for all sufficiently large values of $x$. In a paper to appear, Dudek and Platt \cite{DudekPlatt} have used Theorem \ref{TSwan} to show that, on the Riemann hypothesis, (\ref{ram}) is true for all $x> 38,358,837,682.$ It seems difficult to prove this unconditionally: in this case Dudek and Platt are able to show that (\ref{ram}) is true for all $x\geq \exp(9658)$.

\section{Conclusion}
Theorems \ref{TSwan} and \ref{TLi} could be improved in several ways. First, if one knew that the Riemann hypothesis had been verified to a height greater than $3.061\times 10^{10}$, one could reduce the coefficient in the zero-free region in (\ref{zero2}). Second, one could try to improve Kadiri's zero-free region either by reducing the value of $R$ or by improving the size of $B$ in (\ref{zero}). A higher verification of the Riemann hypothesis has a mild influence on this method of proof.

Third, one may feed any improvements in a numerical verification of the Riemann hypothesis and the zero-free region into Faber and Kadiri's argument, thereby improving the estimate on $\psi(x) - x$. Finally, one may try to overhaul completely Schoenfeld's paper in order to provide a bespoke version of Theorem~\ref{TSwan}. 

%\subsection*{Acknowledgements}

\bibliographystyle{plain}
\bibliography{themastercanada}

\end{document}